\definecolor{amber}{rgb}{1.0, 0.75, 0.0}
\newcommand{\IR}{\mathbb{R}}
\newcommand{\N}{\mathbb{N}}
\newcommand{\IN}{\mathbb{N}}
\newcommand{\IK}{\mathbb{K}}
\newcommand{\mc}{\mathcal}
\newcommand{\on}{\operatorname}
\newcommand{\FF}{\mathcal{F}}
\newcommand{\vertiii}[1]{{\left\vert\kern-0.25ex\left\vert\kern-0.25ex\left\vert #1 
    \right\vert\kern-0.25ex\right\vert\kern-0.25ex\right\vert}}
\newcounter{smallromans}
\newcounter{smallromansdash}
\newcounter{bigromans} 
  {\end{list}}
\newcommand{\nn}[1]{{\vert\kern-0.25ex\vert\kern-0.25ex\vert #1 
    \vert\kern-0.25ex\vert\kern-0.25ex\vert}}
\newcommand{\lnn}[1]{{\left\vert\kern-0.25ex\left\vert\kern-0.25ex\left\vert #1 
    \right\vert\kern-0.25ex\right\vert\kern-0.25ex\right\vert}}
\renewcommand{\leq}{\leqslant}
\newcommand{\ou}{%
  \mathrel{%
    \vcenter{\offinterlineskip
      \ialign{##\cr$\forall$\cr\noalign{\kern-1.5pt}$\exists$\cr}%
    }%
  }%
}
\newcounter{maintheorem}
\newtheorem{mainth}[maintheorem]{Theorem}
\newtheorem*{mainthprime*}{Theorem A$^\prime$}
\newtheorem{corollary*}{Corollary}
\theoremstyle{definition}
\theoremstyle{remark}
\newtheorem{question}{Question}
\title[Coordinate functionals for filter bases]{Continuity of coordinate functionals\\ of filter bases in Banach spaces}
\subjclass[2010]{Primary 46B15, Secondary 03E60}
\author[N.~de~Rancourt]{No\'{e} de Rancourt}
\address[N.~de~Rancourt]{ Faculty of Mathematics and Physics, Department of Mathematical Analysis, Sokolovsk\'a 49/83, 186 75 Praha 8, Czech Republic
}
\email{rancourt@karlin.mff.cuni.cz}
\author[T.~Kania]{Tomasz Kania}
\address[T.~Kania]{Mathematical Institute\\Czech Academy of Sciences\\\v Zitn\'a 25 \\115 67 Praha 1\\Czech Republic  and  Institute of Mathematics and Computer Science\\ Jagiellonian University\\ {\L}ojasiewicza 6, 30-348 Krak\'{o}w, Poland
}
\email{kania@math.cas.cz, tomasz.marcin.kania@gmail.com}
\author[J.~Swaczyna]{Jaros{\l}aw Swaczyna}
\address[J.~Swaczyna]{Mathematical Institute\\Czech Academy of Sciences\\\v Zitn\'a 25 \\115 67 Praha 1\\Czech Republic  and  Institute of Mathematics, {\L}\'od\'z University of Technology, W\'olcza\'nska 215, 93-005 {\L}\'od\'z, Poland}
\email{swaczyna@math.cas.cz, jaroslaw.swaczyna@p.lodz.pl}
\thanks{The two last-named authors acknowledge with thanks support received from GA\v{C}R project 19-07129Y; RVO 67985840.}
\keywords{Filter bases in Banach spaces, generalised bases, automatic continuity, definable filters}
\subjclass[2020]{Primary 46B15, 46B20,
54A20; Secondary 03E15}
\date{\today}
\begin{document}
\maketitle
\begin{abstract}
    We prove that the coordinate functionals associated with filter bases in Banach spaces are continuous as long as the underlying filter is analytic. This removes the large-cardinal hypothesis from the result established by the two last-named authors ([Bull.~Lond. Math.~Soc.~53~(2021)]) at the expense of reducing the generality from projective to analytic. In particular, we obtain a ZFC solution to Kadets' problem of continuity of coordinate functionals associated with bases with respect to the filter of statistical convergence. Even though the automatic continuity of coordinate functionals beyond the projective class remains a mystery, we prove that a basis with respect to an arbitrary filter that has continuous coordinate functionals is also a basis with respect to a filter that is analytic.
\end{abstract}
\section{Introduction}
Filter bases generalise the familiar concept of a Schauder basis in a~Banach space in a~very natural way: let $\mathcal{F}$ be a filter of subsets of the natural numbers (a family of subsets that is closed with respect to finite intersections and is upwards-closed with respect to set inclusion; in order to avoid trivialities, we assume that every filter contains the filter of cofinite sets)
and let $X$ be a~Banach space (we consider Banach spaces over the fixed field $\mathbb K$ of real or complex numbers). A~sequence  $(e_n)_{n=1}^\infty$ in $X$ is an $\mathcal{F}$\emph{-basis} for $X$ whenever for each $x\in X$ there exists a unique sequence of scalars $(e^*_n(x))_{n=1}^\infty$ such that $x = \sum_{n,\mathcal{F}} e^*_n(x) e_n$, where the series is interpreted as the $\mathcal{F}$-limit of the sequence of the corresponding partial sums. (A sequence $(x_n)_{n=1}^\infty$ in a normed space $X$ $\FF$-\emph{converges} to $x\in X$ whenever for all $\varepsilon > 0$ there is $A\in \FF$ such that for all $n\in A$ one has $\|x - x_n\|<\varepsilon$.)\smallskip

Unlike for more familiar Schauder bases (\emph{i.e.}, filter bases with respect to the filter of cofinite sets), the question of continuity of the coordinate (linear) functionals $x\mapsto e^*_n(x)$ ($x\in X$) is non-obvious. This very problem was posed by Kadets and revived in 2011 with a particular emphasis on the filter of statistical convergence, that is the filter of subsets of $\IN$ having density $1$ (see, \emph{e.g.}, \cite{CGK,GanichevKadets}). Certainly, the problem is equivalent to the problem of continuity of the initial $\mathcal{F}$-basis projections $(P_n)_{n=1}^\infty$ given by $P_nx = \sum_{i=1}^n e_i^\star(x)e_i$ for $n\in \IN$ and $x\in X$. \smallskip


The present paper subsumes (at least in the analytic case) the results obtained in the recent study by the two last-named authors \cite{KS}, where it was proved that, assuming the existence of a supercompact cardinal, the coordinate functionals associated with a filter basis are continuous as long as the filter is projective. The aim of the paper is twofold. First, we remove the extra assumption of the existence of supercompact cardinals used for proving automatic continuity of the evaluation functionals of a filter basis at the expense of restricting to analytic ideals, which appear to be quite sufficient from the point of view of applications. In particular, it is sufficient to answer Kadets' question concerning the filter of statistical convergence in ZFC. Second, we contribute to (and hopefully trigger) further development of the theory of filter bases in Banach spaces, that may serve as a~proxy in the situation where genuine Schauder bases are unavailable. For instance, we prove the principle of small perturbations for such bases (Theorem~\ref{Th:C}) that is a basic result in the context of Schauder bases.\smallskip

We wish to emphasise that filter bases, despite striking similarities to Schauder bases, may behave quite differently.
\begin{itemize}
    \item Indeed, there is a filter basis (with respect to the filter of statistical convergence) in a separable Hilbert space whose initial projections are bounded, yet not uniformly (\cite[Example 1]{CGK}).
    \item Kochanek proved in \cite{Kochanek} that for filters generated by fewer sets than the pseudo-intersection number (in particular, for countably generated filters) one indeed retains the automatic continuity of initial basis projections. Moreover, for `filter-many' of them there is a uniform bound on the norm of these. However, the filter of statistical convergence is generated by precisely continuum many sets, so Kochanek's result does not apply to it.
    \item  Avil\'es, Kadets, P\'erez H\'ernandez, and Solecki \cite{AKHS} distilled  the key property from Ko\-cha\-nek's proof that makes it work and termed filters having it \emph{Baire filters}. However analytic Baire filters are necessarily countably generated (\cite[Proposition 4.1]{AKHS}).
\end{itemize}

\smallskip

A word of explanation behind topological properties of filters is required. Since there is a natural correspondence between subsets of the set of natural numbers and 0-1 sequences, one may treat filters on this set as subsets of the Cantor set $\{0,1\}^\IN$ equipped with the product topology and consider properties such as being Borel, analytic (continuous image
of a Polish space), etc.~that we shall freely do. For more information, we refer to \cite{Farah}, where the the dual notion of an ideal is considered. All unexplained terminology from Descriptive Set Theory (such as the definitions of the $\mathbf{\Sigma}^i_n$, $\mathbf{\Pi}^i_n$, and $\mathbf{\Delta}^i_n$-classes, $i=0,1$, $n\in\mathbb N$, and of Baire-measurability) is in line with \cite{Kechris}.\smallskip

Our first main result, Theorem~\ref{T: cont ZFC}, subsumes the main result of \cite{KS}.
\begin{mainth}\label{T: cont ZFC}
 Let $\FF$ be an analytic filter on $\IN$. Then for every $\FF$-basis the corresponding coordinate functionals are continuous.
\end{mainth}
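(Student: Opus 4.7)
The plan is to show that each coordinate functional $e_n^*$ is Baire-measurable and then to invoke the classical automatic continuity theorem: a Baire-measurable linear functional on a Banach space is continuous. This parallels the approach in \cite{KS}, but the large-cardinal hypothesis can now be avoided because analytic subsets of Polish spaces have the Baire property as a ZFC theorem. Note first that $X$ is separable, hence Polish, as it equals the closed linear span of the countable set $\{e_n : n \in \IN\}$.

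The heart of the argument is to realise the $\FF$-expansion relation as an analytic subset of $X \times \IK^\IN$. A sequence $(x_N)$ $\FF$-converges to $x$ iff for every $k \in \IN$ the set $\{N : \|x - x_N\| < 1/k\}$ belongs to $\FF$. Applied to the partial sums $s_N(\lambda) := \sum_{n=1}^N \lambda_n e_n$, this gives
\[
R \;:=\; \bigl\{(x, \lambda) \in X \times \IK^\IN : \textstyle\sum_{n,\FF} \lambda_n e_n = x\bigr\} \;=\; \bigcap_{k \in \IN} g_k^{-1}(\FF),
\]
where $g_k : X \times \IK^\IN \to \{0,1\}^\IN$ sends $(x,\lambda)$ to the characteristic sequence of $\{N : \|x - s_N(\lambda)\| < 1/k\}$. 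For each fixed $N$, the map $(x, \lambda) \mapsto \|x - s_N(\lambda)\|$ is continuous, so each $g_k$ is Borel; combined with the analyticity of $\FF \subseteq \{0,1\}^\IN$, this makes $R$ analytic (as a countable intersection of Borel preimages of an analytic set).

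By the uniqueness clause in the definition of an $\FF$-basis, $R$ is exactly the graph of the linear map $f : X \to \IK^\IN$, $f(x) = (e_n^*(x))_{n\geq1}$. For every open $U \subseteq \IK^\IN$,
\[
f^{-1}(U) \;=\; \mathrm{proj}_X\bigl(R \cap (X \times U)\bigr)
\]
is analytic in $X$, and hence has the Baire property. The class of subsets of $\IK^\IN$ whose $f$-preimage has the Baire property is a $\sigma$-algebra containing the opens, so $f$ is Baire-measurable. Each $e_n^* = \pi_n \circ f$, where $\pi_n$ is the continuous $n$-th coordinate projection, is therefore a Baire-measurable linear functional on $X$, and continuity follows from Pettis's automatic continuity theorem for Baire-measurable homomorphisms of Polish groups.

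The only delicate point is the analyticity of $R$; this rests on the elementary observation that the Borel preimage of an analytic set is analytic, plus the translation of the quantitative definition of $\FF$-convergence into a countable conjunction of membership assertions in $\FF$. No step requires more descriptive-set-theoretic regularity than the Baire property of $\mathbf{\Sigma}^1_1$-sets, which is exactly why, in contrast with \cite{KS}, no large-cardinal assumption is invoked.
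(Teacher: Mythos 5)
Your proof is correct and follows essentially the same route as the paper: show that preimages of open sets under the coordinate functionals are analytic (by unfolding the definition of $\FF$-convergence and using closure of analytic sets under Borel preimages, countable intersections, and projections), hence Baire-measurable, and conclude by the Banach--Pettis theorem. Your encoding via the Borel maps $g_k$ and the upward closure of $\FF$ is a slightly tidier organisation of the paper's projection/intersection computation, and you rightly omit the coanalytic/Souslin detour, which the paper itself remarks is unnecessary since analytic sets already have the Baire property.
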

As the filter of statistical convergence is Borel (\cite[Example 1.2.3(d)]{Farah}), hence analytic (by \cite[Theorem 13.7]{Kechris}), we obtain a~solution in ZFC to Kadets' problem.
\begin{corollary*}
 Let $\FF_{{\rm st}}$ be the filter of statistical convergence. Then for every $\FF_{{\rm st}}$-basis the corresponding coordinate functionals are continuous.
\end{corollary*}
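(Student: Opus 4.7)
The plan is to convert the problem into a descriptive-set-theoretic one and exploit the ZFC fact that analytic subsets of Polish spaces enjoy the Baire property (Luzin--Nikodym). Concretely, I will show that each $e_n^*$ is Baire-measurable and then invoke the classical automatic-continuity principle that a Baire-measurable linear functional on a Banach space is automatically continuous (a standard consequence of Pettis' lemma applied to the additive Polish group $X$).

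Equip $\IK^\IN$ with the product topology, making it a Polish space, and consider the graph of the coefficient assignment,
$$\Gamma \;=\; \bigl\{(x,(a_n)) \in X \times \IK^\IN : \textstyle\sum_{n,\FF} a_n e_n = x\bigr\}.$$
Unwinding the definition of $\FF$-convergence, a pair $(x,(a_n))$ lies in $\Gamma$ if and only if, for every rational $\e > 0$, the set
$$A_\e(x,(a_n)) \,:=\, \bigl\{N\in\IN : \|x - \textstyle\sum_{k\leq N} a_k e_k\| < \e\bigr\}$$
belongs to $\FF$. For fixed $\e$, the map $(x,(a_n)) \mapsto A_\e(x,(a_n))$ is Borel from $X \times \IK^\IN$ to $2^\IN$ (each coordinate being a Borel indicator coming from an open inequality), so $\{(x,(a_n)) : A_\e(x,(a_n)) \in \FF\}$ is the Borel preimage of the analytic set $\FF$ and hence analytic. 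As a countable intersection of such sets, $\Gamma$ is itself analytic. For fixed $n$ and any open $U \subseteq \IK$, the preimage $(e_n^*)^{-1}(U)$ coincides with the projection to $X$ of $\Gamma \cap (X \times V_n)$, where $V_n = \{(a_k)\in\IK^\IN : a_n \in U\}$ is open; by uniqueness of the coefficients and the fact that projections of analytic sets are analytic, $(e_n^*)^{-1}(U)$ is analytic, possesses the Baire property, and so $e_n^*$ is Baire-measurable, as required.

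The heart of the argument is the analyticity bookkeeping for $\Gamma$: this is where the hypothesis on $\FF$ enters, and the step beyond which the method stalls. For a general projective filter one would need Borel preimages of $\FF$ to enjoy the Baire property, which is precisely the point at which \cite{KS} had to invoke a supercompact cardinal in order to access the corresponding regularity properties of projective sets. The two other ingredients---the Baire property for analytic sets and automatic continuity of Baire-measurable linear functionals on Banach spaces---are entirely classical and hold in ZFC, explaining why no large-cardinal hypothesis is needed at the analytic level.
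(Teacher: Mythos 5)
Your argument is correct and is essentially the paper's own route: the paper derives this corollary by citing Theorem~\ref{T: cont ZFC}, whose proof is exactly your computation (show $(e_n^\star)^{-1}(U)$ is analytic by descriptive-set-theoretic bookkeeping with the filter, then use the Baire property of analytic sets and Pettis' theorem), except that the paper encodes the quantifier $\exists A\in\FF$ via a projection from $X\times\IK^\IN\times\{0,1\}^\IN$ while you use upward-closure of $\FF$ and a Borel map into $2^\IN$ -- a cosmetic difference. The one ingredient specific to this corollary that you leave implicit is the verification that $\FF_{\rm st}$ is actually analytic; this is standard (density $1$ is an $F_{\sigma\delta}$, hence Borel, condition on $\{0,1\}^\IN$) but should be stated, since it is the only place where the particular filter matters.
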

Theorem~\ref{T: cont ZFC} has a natural generalisation in the case where all $\mathbf{\Delta}^1_n$-subsets of Polish spaces are Baire-measura\-ble. Since this assumption cannot be proved in ZFC alone, we decided to present the result as a separate theorem to highlight that Theorem~\ref{T: cont ZFC} is a theorem of ZFC, even though Theorem~\ref{T: cont ZFC} follows from Theorem A$^\prime$.
\begin{mainthprime*}\label{T: cont meas}
  Let $n\geqslant 1$. Assume that all $\mathbf{\Delta}^1_n$-subsets of Polish spaces are Baire-measurable. Suppose that $\FF$ is a $\mathbf\Sigma^1_n$-filter on $\IN$. Then for every $\FF$-basis the corresponding coordinate functionals are continuous.
 \end{mainthprime*}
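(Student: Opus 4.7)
The plan is to deduce the continuity of each coordinate functional $e_k^*$ from its Baire-measurability via the classical automatic continuity theorem of Banach--Pettis: a linear functional on a Banach space which has the Baire property is continuous. Observe first that $X$ must be separable, as it coincides with the closed linear span of $(e_n)_{n=1}^\infty$, so we are working within the Polish category throughout.

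The heart of the proof is a complexity calculation. Define the graph-type relation $G\subseteq X\times\IK^\IN$ by
$G(x,a)\iff x=\sum_{n,\FF}a_n e_n$. Unwrapping the definition of $\FF$-convergence, $G(x,a)$ holds iff for every $k\in\IN$ the set $A_{x,a,k}:=\{n\in\IN:\|x-\sum_{i=1}^n a_i e_i\|<1/k\}$ belongs to $\FF$. Since $(x,a)\mapsto A_{x,a,k}\in 2^\IN$ is Borel and $\FF\in\mathbf{\Sigma}^1_n$, each slice $\{(x,a):A_{x,a,k}\in\FF\}$ is $\mathbf{\Sigma}^1_n$; taking the countable intersection over $k$ keeps us in $\mathbf{\Sigma}^1_n$, so $G$ itself is $\mathbf{\Sigma}^1_n$. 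Now for any open $U\subseteq\IK$ and any $k\in\IN$, the \emph{existence and uniqueness} of the $\FF$-expansion yield two simultaneous descriptions
\[
(e_k^*)^{-1}(U)=\{x\in X:\exists a\in\IK^\IN\ G(x,a)\wedge a_k\in U\}=\{x\in X:\forall a\in\IK^\IN\ G(x,a)\Rightarrow a_k\in U\}.
\]
The first exhibits $(e_k^*)^{-1}(U)$ as the projection of a $\mathbf{\Sigma}^1_n$ set onto a Polish factor, hence $\mathbf{\Sigma}^1_n$; the second, after pushing the negation inside, exhibits it as the universal projection of a $\mathbf{\Pi}^1_n$ set, hence $\mathbf{\Pi}^1_n$. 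Consequently $(e_k^*)^{-1}(U)\in\mathbf{\Delta}^1_n$.

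By the hypothesis of Theorem A$'$, every $\mathbf{\Delta}^1_n$ subset of a Polish space has the Baire property, so $e_k^*$ is Baire-measurable, and Banach--Pettis finishes the argument. The main (really the only) obstacle is the complexity bookkeeping: uniqueness of the $\FF$-expansion is what upgrades the transparent $\mathbf{\Sigma}^1_n$ description of the preimages to a $\mathbf{\Delta}^1_n$ one, matching exactly the available regularity threshold. This line of reasoning avoids any appeal to generic absoluteness or large cardinals, and specialising to $n=1$ (where $\mathbf{\Delta}^1_1$-Baire-measurability is automatic) yields Theorem A as a theorem of ZFC.
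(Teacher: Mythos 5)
Your proof is correct and follows essentially the same route as the paper's: one shows that $(e_k^\star)^{-1}(U)$ is $\mathbf{\Delta}^1_n$ for every open $U\subseteq\IK$ by unwinding the definition of $\FF$-convergence into a $\mathbf{\Sigma}^1_n$ relation and then applies the Baire-measurability hypothesis together with the Banach--Pettis theorem. The only (cosmetic) divergence is in the $\mathbf{\Pi}^1_n$ upper bound --- the paper writes $U^c$ as a countable intersection of open sets $V_m$ and uses $(e_k^\star)^{-1}(U)^c=\bigcap_m (e_k^\star)^{-1}(V_m)$, whereas you invoke uniqueness of the expansion to pass from the existential to the universal description of the graph; both are standard and equivalent here, and your explicit observation that $X$ is separable (hence Polish) supplies a detail the paper leaves implicit.
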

A word of explanation on the assumption of Baire-measurability of $\mathbf{\Delta}^1_n$-sets is required. A classical result due to Banach and Mazur ensures that for $n \geqslant 1$  Baire-measurability of all $\mathbf{\mathbf{\Delta}}_{n+1}^1$-subsets (and even all $\mathbf{\Sigma}_{n+1}^1$-subsets) of Polish spaces follows from determinacy of $\mathbf{\Sigma}_{n}^1$-games on integers (this is an easy consequence of  \cite[Theorem 21.5]{Kechris}). The latter determinacy assumption has been shown by Martin \cite{MartinMeasurable} to follow for $n = 1$ from the existence of a measurable cardinal and by Martin and Steel \cite{MartinSteel} to follow for $n \geqslant 2$ from the existence of $n$ Woodin cardinals and a measurable cardinal above them. Knowing that supercompact cardinals are measurable (see \cite[Lemma 20.16]{Jech}) and limits of Woodin cardinals (see \cite[Exercise 34.1]{Jech}), it follows that the existence of a supercompact cardinal implies the determinacy of all projective games on integers and hence the Baire-measurability of projective subsets of Polish spaces. In particular, from Theorem A$^{\prime}$ we can recover the main result of \cite{KS}.
\begin{corollary*}[\cite{KS}]
Assume the existence of a supercompact cardinal. Let $\mathcal{F}$ be a projective filter on $\IN$. Then for every $\FF_{{\rm st}}$-basis the corresponding coordinate functionals are continuous.
\end{corollary*}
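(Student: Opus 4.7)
The plan is to deduce this corollary directly from Theorem~A$^\prime$, using the chain of implications already assembled in the paragraph preceding the statement. The argument is a routine concatenation and introduces no new ingredients.

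First, I would fix $n \geq 1$ such that the projective filter $\mathcal{F}$ belongs to the class $\mathbf{\Sigma}^1_n$; such an $n$ exists by the very definition of the projective hierarchy. In order to apply Theorem~A$^\prime$ at this level, the only thing remaining to verify is that every $\mathbf{\Delta}^1_n$-subset of a Polish space is Baire-measurable.

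Second, I would trace the large-cardinal assumption through the standard ladder recalled just above the statement. By \cite[Lemma~20.16]{Jech}, every supercompact cardinal is measurable, and by \cite[Exercise~34.1]{Jech}, it is a limit of Woodin cardinals; in particular, there are at least $n-1$ Woodin cardinals with a measurable above them. Hence, by Martin \cite{MartinMeasurable} in the case $n = 2$ (analytic determinacy from a measurable cardinal) and by Martin-Steel \cite{MartinSteel} in the case $n \geq 3$ (projective determinacy from Woodin cardinals with a measurable above), the integer games with $\mathbf{\Sigma}^1_{n-1}$-payoff are determined. The Banach-Mazur theorem \cite[Theorem~21.5]{Kechris} then delivers Baire-measurability of all $\mathbf{\Sigma}^1_n$-subsets, hence in particular of all $\mathbf{\Delta}^1_n$-subsets, of Polish spaces. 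For $n = 1$ the conclusion is already a theorem of ZFC, since $\mathbf{\Delta}^1_1$ coincides with the Borel class.

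Third, having verified the hypothesis of Theorem~A$^\prime$ at level $n$, I would apply that theorem directly to the $\mathbf{\Sigma}^1_n$-filter $\mathcal{F}$ to conclude that the coordinate functionals of any $\mathcal{F}$-basis are continuous. The only (minor) bookkeeping issue is the one-step index shift between determinacy of $\mathbf{\Sigma}^1_{n-1}$-games and Baire-measurability at level $n$ that is built into the Banach-Mazur characterisation; once this is tracked, no genuine obstacle remains and the corollary is immediate.
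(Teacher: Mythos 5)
Your proposal is correct and follows essentially the same route as the paper, whose justification for this corollary is precisely the preceding paragraph: supercompact $\Rightarrow$ measurable and limit of Woodin cardinals $\Rightarrow$ (Martin, Martin--Steel) projective determinacy $\Rightarrow$ (Banach--Mazur) Baire-measurability of all projective, in particular all $\mathbf{\Delta}^1_n$, sets $\Rightarrow$ Theorem~A$'$ applies. Your bookkeeping of the index shift between determinacy at level $n-1$ and Baire-measurability at level $n$, and your separate treatment of the ZFC case $n=1$, are both accurate.
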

Nonetheless large cardinals are actually \emph{not} required to get Baire-measurability of projective sets. It was proved by Martin and Solovay \cite{MartinSolovay} that under Martin's Axiom and the negation of the Continuum Hypothesis all $\mathbf{\Sigma}_2^1$-subsets of Polish spaces are Baire-measurable. Moreover, it was proved by Shelah \cite{ShelahBP} that $\mathsf{ZFC}$ + `all projective subsets of Polish spaces are Baire-measurable' is equiconsistent with $\mathsf{ZFC}$ alone, thus completely removing the need for large cardinals. Results as those above are often proved in the literature for specific Polish spaces (\emph{e.g.}, the Cantor space), but here we freely use the fact that Baire-measurability results, if established for certain $\mathbf \Sigma^1_n$-classes ($n\in \mathbb N$) of subsets of the Cantor space, can be elementarily transferred to arbitrary Polish spaces using, \emph{e.g.}, \cite[Theorem 3.15]{CKW}, which asserts that any two uncountable Polish spaces without isolated points are Borel-isomorphic via a map that preserves meagre sets. \smallskip

We remark in passing that, assuming the consistency of one of the above-mentioned large-cardinal assumptions, the conclusion of Theorem A$^{\prime}$ is consistent with results obtained by forcings that do not destroy Woodin cardinals and measurable cardinals. It follows from \cite[Corollary]{HW} and \cite[Theorem 3]{LevySolovay} that forcings of reasonably small (accessible) cardinality satisfy this condition.\smallskip

 
Our second main result asserts that a filter basis with respect to a filter that is not analytic is also a filter basis with respect to a filter that actually is analytic, provided the coordinate functionals are continuous.
\begin{mainth}\label{T: reduction to analytic}
     Let $\FF$ be a filter on $\IN$ (not necessarily projective). Let $(e_n)_{n=1}^\infty$ be an $\FF$-basis with continuous coordinate functionals. Then there exists an analytic filter $\FF'\subset\FF$ such that $(e_n)_{n=1}^\infty$ is also an $\FF'$-basis.
\end{mainth}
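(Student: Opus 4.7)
First, I would observe that every $x \in X$ is the $\FF$-limit of its partial sums $P_N x := \sum_{n=1}^N e^*_n(x) e_n$, so $X = \overline{\on{span}}\{e_n : n \in \IN\}$ and is therefore separable; moreover, the continuity of the coordinate functionals makes each $P_N$ a bounded finite-rank operator. The plan is then to extract from $\FF$ a canonical analytic subfamily rich enough to witness $\FF$-convergence of the partial sums for every $x$, and to show that the filter generated by this subfamily (together with the Fr\'echet filter) remains analytic.

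Concretely, for $(x,\varepsilon) \in X \times (0,\infty)$ I would set
\[
A(x,\varepsilon) := \{N \in \IN : \|P_N x - x\| < \varepsilon\},
\]
which lies in $\FF$ by the $\FF$-basis hypothesis. The map $\Phi \colon (x,\varepsilon) \mapsto A(x,\varepsilon)$ from the Polish space $X \times (0,\infty)$ into $2^\IN$ is Borel, since for each fixed $N$ the preimage under $\Phi$ of the sub-basic clopen set $\{A \subseteq \IN : N \in A\}$ is the open set $\{(x,\varepsilon) : \|P_N x - x\| < \varepsilon\}$. Consequently, $\mathcal{H} := \Phi(X \times (0,\infty))$ is an analytic subset of $2^\IN$ contained in $\FF$.

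Next, I would let $\mathcal{G}$ denote the union of $\mathcal{H}$ with the Fr\'echet filter (still analytic, since the Fr\'echet filter is $F_\sigma$), and define $\FF'$ to be the filter generated by $\mathcal{G}$: the collection of $B \subseteq \IN$ containing a finite intersection of elements of $\mathcal{G}$. Then $\FF' \subseteq \FF$, so $\FF'$ is a proper filter on $\IN$ containing the cofinite sets. Analyticity of $\FF'$ follows from a routine descriptive-set-theoretic computation: for each $k \geq 1$ the set
\[
\{(B, G_1, \ldots, G_k) \in (2^\IN)^{k+1} : G_i \in \mathcal{G},\ B \supseteq \textstyle\bigcap_i G_i\}
\]
is analytic (intersection of an analytic product with a closed containment condition), hence its projection onto the $B$-coordinate is analytic, and a countable union of analytic sets is analytic.

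Finally, I would verify that $(e_n)_{n=1}^\infty$ is an $\FF'$-basis. Existence is immediate: for every $x \in X$ and $\varepsilon > 0$, $A(x,\varepsilon) \in \mathcal{H} \subseteq \FF'$, so $P_N x \to x$ along $\FF'$. Uniqueness is automatic: if $x = \sum_{n,\FF'} \alpha_n e_n$, then since $\FF' \subseteq \FF$ the same series $\FF$-converges to $x$, whence $\alpha_n = e_n^*(x)$ by uniqueness of the $\FF$-expansion. The only point requiring genuine care will be confirming that the filter generated by an analytic family is analytic, and this reduces to invoking that projections and countable unions preserve analyticity; everything else is bookkeeping.
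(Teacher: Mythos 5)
Your proposal is correct and follows essentially the same route as the paper: both extract the canonical analytic family of sets $\{N : \|P_N x - x\| \le \varepsilon\}$ (using continuity of the coordinate functionals to get definability), generate a filter from it, and verify analyticity of the generated filter by the same projection-plus-countable-union computation, with uniqueness of expansions inherited from $\FF' \subseteq \FF$. The only cosmetic differences are that you obtain analyticity of the family as a Borel image of $X \times (0,\infty)$ rather than as a projection of an intersection of open sets, and you explicitly adjoin the Fr\'echet filter to guarantee the cofinite sets are present --- a point the paper glosses over.
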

Observe that, in the above result, the coordinate functionals of the basis $(e_n)_{n=1}^\infty$ are the same when considered with respect to the filter $\FF$ or the filter $\FF'$.

\section{Proofs of Theorems~\ref{T: cont ZFC} and A$^\prime$}

\begin{proof}[Proof of Theorem~\ref{T: cont ZFC}]
 Let $X$ be a Banach space with an $\mathcal{F}$-basis $(e_n)_{n=1}^\infty$. We denote by $(e_n^\star)_{n=1}^\infty$ the corresponding coordinate functionals. We start by proving that coordinate functionals $e_n^\star$ ($n\in \N$) are $\mathbf{\Sigma}_1^1$-measurable. Indeed, note that for a fixed open set $U\subset \IR$ we get 
\[
    \begin{aligned}
        e_n^\star(x) \in U &\iff \exists_{(\alpha_i)\in \IK^\IN} \left( \sum_{i, \FF} \alpha_i e_i = x \wedge \alpha_n \in U\right)\\
        & \iff  \exists_{(\alpha_i)\in \IK^\IN} \forall_{l \in \IN} \exists_{A \in \FF} \forall_{m \in A} \left(\Big\|\sum_{i=1}^m \alpha_i e_i - x\Big\| \leq \frac{1}{l} \wedge \alpha_n \in U \right)
 \end{aligned}
\]
By \cite[Proposition 3.3]{Kechris}, the space $X \times \IK^\IN \times \{0, 1\}^\IN$ is Polish. For $m \in \IN$ let us define $\Phi_m \colon X \times \IK^\IN \times \{0, 1\}^\IN \to \IR $ by 
\[\Phi_m(x,(\alpha_i),A)= \Big\|\sum_{i=1}^m \alpha_i e_i - x\Big\|\quad \big(x\in X, (\alpha_i)\in \mathbb K^{\mathbb N}, A\in \{0, 1\}^\IN\big),\] which is a continuous map. We then conclude that for every $l \in \IN$ the set $S_l$ given by
\[
\begin{aligned}
S_l =&  \left\{ (x, (\alpha_i), A) \in X \times \IK^\IN \times \{0, 1\}^\IN\colon \forall_{m \in A} \Big(\Phi_m\left(x,(\alpha_i),A\right) \leq \frac{1}{l} \wedge \alpha_n \in U \Big)\right\}
\\
 =&  \left\{ (x, (\alpha_i), A)\colon \forall_{m \in \IN} \Big[m \notin A \vee \big( \Phi_m\left(x,(\alpha_i),A\right) \leq \frac{1}{l}   \wedge \alpha_n \in U\big)\Big] \right\}
\\
 =&  \bigcap_{m \in \IN} \left\{ (x, (\alpha_i), A)\colon m \notin A \vee \Big( \Phi_m(x,(\alpha_i),A) \leq \frac{1}{l}   \wedge \alpha_n \in U \Big)\right\}
\\
= &\bigcap_{m \in \IN} \Bigg[\left\{ (x, (\alpha_i), A)\colon m \notin A \right\} \cup \Big( \big\{ (x, (\alpha_i), A)\colon \Phi_m\left(x,(\alpha_i),A\right) \leq \frac{1}{l}\big\} \\
& \cap \left\{ (x, (\alpha_i), A)\colon  \alpha_n \in U \right\} \Big) \Bigg]
\end{aligned}
\]
is a $G_\delta$ subset of $X \times \IK^\IN \times \{0, 1\}^\IN$ (the condition `$m \notin A$' encodes a set that is both closed and open in $\{0, 1\}^\IN$).\smallskip

Recall that countable intersections of analytic subsets of Polish spaces are analytic, and that images and preimages of analytic sets by a continuous maps between Polish spaces are analytic (see \cite[Proposition 14.4]{Kechris}). Also recall that Borel subsets of Polish spaces are analytic (see \cite[Theorem 13.7]{Kechris}). Since the filter $\FF$ is analytic, we deduce that its preimage $X \times \mathbb{K}^\IN \times \FF$ by the projection $X \times \mathbb{K}^\IN \times \{0, 1\}^\N \to \{0, 1\}^\N$ is also analytic, thus the set $$S:=\bigcap_{l \in \IN}\operatorname{proj}_{X \times \mathbb{K}^\IN}[X \times \mathbb{K}^\IN \times \FF \cap S_l]$$ is analytic. Observe that, for $x \in X$ and $(\alpha_i) \in \mathbb{K}^\IN$, we have
\[
    \begin{aligned}
       (x, (\alpha_i)) \in S &\iff \forall_{l \in \IN}\exists_{A \in \FF} (x, (\alpha_i), A) \in S \\
        & \iff  \forall_{l \in \IN} \exists_{A \in \FF} \forall_{m \in A} \left(\Big\|\sum_{i=1}^m \alpha_i e_i - x\Big\| \leq \frac{1}{l} \wedge \alpha_n \in U \right).
 \end{aligned}
\]
Thus we have $(e_n^*)^{-1}(U) = \operatorname{proj}_X[S]$, and we conclude that $(e_n^\star)^{-1}(U)$ is analytic.\smallskip

On the other hand, $e_n^\star$ is also $\mathbf\Pi_1^1$-measurable. Indeed, writing $U^c =: \bigcap_{n \in \N} V_n$, where the $V_n$'s are open subsets of $\mathbb{K}$, we obtain
\[
(e_n^\star)^{-1}(U)^c = \bigcap_{n \in \N}(e_n^\star)^{-1}(V_n),
\]
an analytic set. \smallskip

Once we know that for any open $U \subset \IK$ the set $(e_n^\star)^{-1}(U)$ is both analytic and coanalytic, by Souslin's Theorem (\cite[Theorem 14.11]{Kechris}), it is Borel, so $e_n^\star$ is Borel-measurable. Since each Borel set is Baire-measurable, we conclude that $e_n^\star$ is a Baire-measurable homomorphism between Polish groups $X$ and $\IK$. Consequently, by the Banach--Pettis Theorem (\cite[Theorem 9.10]{Kechris}) $e_n^\star$ is continuous.
\end{proof}
It was not essential for the proof of Theorem~\ref{T: cont ZFC} to prove $\mathbf{\Pi}_1^1$-measurablity of the coordinate functionals as all analytic sets are already Baire-measureable (see \cite[Theorem 21.6]{Kechris}). Nontheless, we decided to follow this way of reasoning in order to emphasise the analogy with the proof of Theorem A$^\prime$ whose proof is modelled on the proof of Theorem \ref{T: cont ZFC} and the latter follows from the former. However, for reader's convenience, we present a~sketch  of the proof with only certain details that are different.

\begin{proof}[Proof of Theorem A$^\prime$]
As in the proof of \ref{T: cont ZFC}, we fix an open $U \subset \IK$ and we get that
\[
    \begin{aligned}
        e_n^\star(x) \in U &\iff  \exists_{(\alpha_i)\in \IK^\IN} \forall_{l \in \IN} \exists_{A \in \FF} \forall_{m \in A} \left(\Big\|\sum_{i=1}^m \alpha_i e_i - x\Big\| \leq \frac{1}{l} \wedge \alpha_n \in U \right)
 \end{aligned}
\]
The family of $\mathbf{\Sigma}_n^1$-sets contains Borel sets, is closed under countable intersections, and under images and preimages with respect to continuous maps between Polish spaces (see \cite[Theorem 37.1]{Kechris}). So we can deduce similarly as in the proof of Theorem \ref{T: cont ZFC} that the set $(e_n^\star)^{-1}(U)$ is $\mathbf{\Sigma}_n^1$.\smallskip

Writing $U^c =: \bigcap_{n \in \N} V_n$, where the $V_n$'s are open sets, we have
\[
(e_n^\star)^{-1}(U)^c = \bigcap_{n \in \N}(e_n^\star)^{-1}(V_n),
\]
an $\mathbf{\Sigma}_n^1$-set. We deduce that $(e_n^\star)^{-1}(U)$ is $\mathbf{\Pi}_n^1$, so taking into account first part of proof we see that it is $\mathbf{\Delta}_n^1$. \smallskip

By the hypothesis, all $\mathbf{\Delta}_n^1$-sets are Baire-measurable, so $e_n^\star$ ($n\in\mathbb N$) is a Baire-measurable group homomorphism. Consequently, by the Banach--Pettis Theorem $e_n^\star$ ($n\in\mathbb N$) is continuous.
\end{proof}

 \section{Proof of Theorem~\ref{T: reduction to analytic}}
We are ready to prove Theorem~\ref{T: reduction to analytic}.

\begin{proof}[Proof of Theorem~\ref{T: reduction to analytic}]
Set 
\[
    \mc{A}:= \Bigg\{A \subset \IN\colon \exists_{x \in X} \exists_{\varepsilon>0}\; A \supset \Big\{ n \in \IN\colon \big\|\sum_{i=1}^n e_i^\star(x) e_i - x \big\| \leq \varepsilon \Big\}\Bigg\}.
\]
It follows from the fact that $(e_n)_{n=1}^\infty$ is an $\FF$-basis that $\mc{A} \subset \FF$. Note that $(e_n)_{n=1}^\infty$ remains an $\FF^\prime$-basis for any filter $\FF'$ on $\IN$ such that $\mc{A} \subset \FF' \subset \FF$. Now, for every $n \in \N$, consider the set
\[
\mc{B}_n= \Big\{ (x,\varepsilon,A)\in X \times (0,\infty)\times \{0, 1\}^\IN\colon n \in A \vee \big\|\sum_{i=1}^n e_i^\star(x) e_i - x \big\| > \varepsilon \Big\}.
\]
Using the continuity of $e_i^\star$ ($i\in \mathbb N$) and the fact that $\{0, 1\}^\IN$ comes equipped with the topology of pointwise convergence, we observe that $\mc{B}_n$ is open. Observing that
\[
\mc{A} = \on{proj}_{\{0, 1\}^\IN}\Big[\bigcap_{n \in \IN} \mc{B}_n\Big],
\]
we deduce that $\mc{A}$ is analytic.\smallskip

As $\mc{A} \subset \FF$, finite intersections on elements of $\mc{A}$ are readily non-empty. Consequently, $\mc{A}$ generates a filter $\FF' \subset \FF$. We have:
\[
    \begin{aligned}
    \FF^\prime= &\{A \supset \IN\colon \exists_{n \in \IN} \exists_{A_1, A_2, \ldots, A_n \in \mc{A}} A \supset A_1 \cap A_2 \cap \ldots \cap A_n \} \\
    = & \bigcup_{n \in \IN} \on{proj}_{(\{0, 1\}^\N)_1} [\{ (A, A_1, \ldots, A_n)\in (\{0, 1\}^\N)^{n+1}\colon  A \supset A_1 \cap \ldots \cap A_n \wedge A_1, \ldots, A_n \in \mc{A}\}],
\end{aligned}
\]
where $\on{proj}_{(\{0, 1\}^\N)_1}$ is the projection onto the first coordinate. Consequently, $\FF'$ is an analytic filter and $(e_n)_{n=1}^\infty$ is an $\FF'$-basis since $\mc{A}\subset \FF' \subset \FF$.
\end{proof}


\section{Closing remarks}
Having established continuity of basis projections (at least for analytic filters as proved in Theorem~\ref{T: cont ZFC}, which in the light of Theorem~\ref{T: reduction to analytic} appears to be sufficient for applications), it is possible to develop a filter basis theory analogous to the theory of Schauder bases. For instance, we may recover the  Bessaga--Pe{\l}czy\'nski small-perturbations principle.
\begin{mainth}\label{Th:C}
    Let $X$ be a Banach space with an $\mathcal{F}$-basis $(e_n)_{n=1}^\infty$, with continuous coordinate functionals (for example, $\mathcal{F}$ is analytic). If $(f_n)_{n=1}^\infty$ is a sequence in $X$ such that
        \[
            \sum_{n=1}^\infty {\|f_n-e_n\|\cdot \|e_n^*\|} \leqslant \delta < 1,
        \]
    for some $\delta\in (0,1)$, then $(f_n)_{n=1}^\infty$ is congruent to $(e_n)_{n=1}^\infty$, that is, $(f_n)_{n=1}^\infty$ is an $\mathcal{F}$-basis and the assignment $e_n \mapsto f_n$ $(n\in \mathbb N)$ extends to a surjective isomorphism of $X$ onto itself.

\end{mainth}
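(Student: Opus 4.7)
The plan is to adapt the classical Bessaga--Pe{\l}czy\'nski small-perturbations argument. Its essential ingredients are that the coordinate functionals $e_n^\star$ are continuous (which is the standing hypothesis, guaranteed by Theorem~\ref{T: cont ZFC} when $\FF$ is analytic), that continuous linear maps between Banach spaces commute with $\FF$-limits (which is immediate from the definition of $\FF$-convergence), and that a small perturbation of the identity is invertible in $\mc{B}(X)$.

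First, using the continuity of the $e_n^\star$, I would define an operator $S\colon X\to X$ by
\[
  Sx = \sum_{n=1}^\infty e_n^\star(x)\,(f_n-e_n)\qquad (x\in X).
\]
The series converges absolutely and satisfies the estimate
\[
  \|Sx\|\leq \sum_{n=1}^\infty |e_n^\star(x)|\,\|f_n-e_n\|\leq \|x\|\sum_{n=1}^\infty \|e_n^\star\|\,\|f_n-e_n\|\leq \delta\|x\|.
\]
Hence $S$ is bounded with $\|S\|\leq \delta<1$, and $T:=I+S$ is a bounded invertible operator on $X$ by Neumann series. A direct computation from $e_k^\star(e_n)=\delta_{kn}$ gives $Se_n=f_n-e_n$, so $Te_n=f_n$ for every $n$; thus $T$ already delivers the required surjective isomorphism extending the assignment $e_n\mapsto f_n$.

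It remains to show that $(f_n)_{n=1}^\infty$ is itself an $\FF$-basis. For \emph{existence} of an expansion, given $y\in X$ set $x:=T^{-1}y$; the partial sums $\sum_{i=1}^m e_i^\star(x) e_i$ $\FF$-converge to $x$, so by continuity of $T$ and the identity $Te_i=f_i$, the partial sums $\sum_{i=1}^m e_i^\star(T^{-1}y)\,f_i$ $\FF$-converge to $Tx=y$. For \emph{uniqueness}, if $\sum_{i=1}^m \beta_i f_i$ $\FF$-converges to $y$, then applying the continuous map $T^{-1}$ and using $T^{-1}f_i=e_i$ shows that $\sum_{i=1}^m \beta_i e_i$ $\FF$-converges to $T^{-1}y$, and the uniqueness of the $(e_n)$-expansion forces $\beta_i=e_i^\star(T^{-1}y)$ for every $i$. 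In particular, the coordinate functionals of the $\FF$-basis $(f_n)$ are given by $f_n^\star=e_n^\star\circ T^{-1}$ and are automatically continuous.

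Since each step is a direct consequence of the continuity hypothesis together with standard Banach-space arithmetic, I expect no serious obstacle. The only conceptual point worth isolating is that the passage to $\FF$-limits is legitimate under a continuous linear map, which is precisely where the continuity of the $e_n^\star$ enters and cannot be dispensed with.
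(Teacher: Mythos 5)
Your proposal is correct and follows essentially the same route as the paper: both define the perturbation operator $Sx=\sum_n e_n^\star(x)(f_n-e_n)$ (the paper calls it $T$), bound its norm by $\delta<1$ using the continuity of the coordinate functionals, and invert $I+S$ by a Neumann series to obtain the congruence. You additionally spell out the step the paper leaves implicit — that a bounded linear isomorphism commutes with $\FF$-limits, so existence and uniqueness of the $(f_n)$-expansion transfer from the $(e_n)$-expansion via $T^{\pm1}$ — which is a welcome clarification rather than a deviation.
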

\begin{proof}
The proof are analogous to the familiar case of Schauder bases. Indeed, we observe that the operator $T\colon X\to X$ given by
$T(\sum_{i,\mathcal{F}}a_i e_i) =  \sum_{i,\mathcal{F}}(a_i(f_i - e_i))$ has norm strictly less than one. Indeed, for $x = \sum_{i,\FF} a_i e_i$ we have
\[
\begin{aligned}
\|Tx\|=& \Big\|\sum_{i,\FF} a_i (e_i-f_i)\Big\| 
\leq  \sum_{i}\| a_i (e_i-f_i)\|
= \sum_{i}\| e_i^\star (x) (e_i-f_i)\|\\
\leq & \sum_{i}\| e_i^\star \| \| x \| \| e_i-f_i \| = \Big( \sum_{i}\| e_i^\star \| \| e_i-f_i \| \Big) \| x \| \leqslant \delta \|x\|.
\end{aligned}
\]
Consequently, $I_X+T$ is invertible
and maps $e_i$ to $f_i$ ($i\in \mathbb N$), which proves that $(f_n)_{n=1}^\infty$ is an $\mathcal F$-basis too.
\end{proof}
Interestingly, general $\mathcal{F}$-bases are different from Schauder bases as, for example, we do not have the basis constant at the disposal. Indeed, it follows from Grunblum's criterion (\cite[Theorem V.1]{Diestel}) that if the basis projections are uniformly bounded, then the $\mathcal{F}$-basis is already a~Schauder basis.\smallskip

We conclude our note with two open problems.
%
\begin{question}
    Does there exists a Banach space $X$, a filter $\FF$ on $\IN$ and an $\FF$-basis $(e_n)_{n=1}^\infty$ for $X$ such that not all coordinate projections $(e_n^\star)_{n=1}^\infty$ are continuous?
\end{question}
Theorem \ref{T: reduction to analytic} demonstrates that filter bases with respect to analytic filters provide a very natural framework for the theory. In \cite[Example 1]{CGK} the authors provided an example of an $\FF_{{\rm st}}$-basis in $\ell_2$ that is not a Schauder basis. However, $\FF_{{\rm st}}$ has a relatively low complexity being $F_{\sigma\delta}$ (\cite[Example 1.2.3(d)]{Farah}). It is thus natural to ask whether being analytic in Theorem \ref{T: reduction to analytic} can be strengthened to being Borel.
\begin{question}
    Let $\FF$ be a filter and let $(e_n)_{n=1}^\infty$ be an $\FF$-basis for a Banach space $X$ with continuous coordinate functionals. Does there exists a Borel filter $\FF'$ such that $(e_n)_{n=1}^\infty$ is also an $\FF'$-basis? Should it be the case, what is the smallest complexity of such $\FF'$? 
\end{question}

\subsection*{Acknowledgements}
The authors are grateful to Ji\v{r}\'{\i} Spurn\'y and Miroslav Zelen\'y for organisning the 49\textsuperscript{th} Winter School in Abstract analysis in Sn\v{e}\v{z}n\'e, Czech Republic in January 2022 during which the research discussed in the present note was initiated. The second-named author thus cordially acknowledges support received from SONATA 15 No. 2019/35/D/\-ST1/\-01734 that allowed him to travel to this conference.

\bibliographystyle{plain}

\end{document}